\documentclass[3p,12pt]{elsarticle}
\makeatletter\if@twocolumn\PassOptionsToPackage{switch}{lineno}\else\fi\makeatother

      \makeatletter
\usepackage{wrapfig}
\newcounter{aubio}
\long\def\bioItem{%
\@ifnextchar[{\@bioItem}{\@@bioItem}}

\long\def\@bioItem[#1]#2#3{
 \stepcounter{aubio}
 \expandafter\gdef\csname authorImage\theaubio\endcsname{#1}
 \expandafter\gdef\csname authorName\theaubio\endcsname{#2}
 \expandafter\gdef\csname authorDetails\theaubio\endcsname{#3}
}

\long\def\@@bioItem#1#2{
 \stepcounter{aubio}
 \expandafter\gdef\csname authorName\theaubio\endcsname{#1}
 \expandafter\gdef\csname authorDetails\theaubio\endcsname{#2}
}

\newcommand{\checkheight}[1]{%
  \par \penalty-100\begingroup%
  \setbox8=\hbox{#1}%
  \setlength{\dimen@}{\ht8}%
  \dimen@ii\pagegoal \advance\dimen@ii-\pagetotal
  \ifdim \dimen@>\dimen@ii
    \break
  \fi\endgroup}

\def\printBio{%
  \@tempcnta=0
   \loop
     \advance \@tempcnta by 1
     \def\aubioCnt{\the\@tempcnta}
     \setlength{\intextsep}{0pt}%
     \setlength{\columnsep}{10pt}%
     \expandafter\ifx\csname authorImage\aubioCnt\endcsname\relax%
      \else%
       \checkheight{\includegraphics[height=1.25in,width=1in,keepaspectratio]{\csname authorImage\aubioCnt\endcsname}}
        \begin{wrapfigure}{l}{25mm}
         \includegraphics[height=1.25in,width=1in,keepaspectratio]{\csname authorImage\aubioCnt\endcsname}
        \end{wrapfigure}\par
      \fi
     \noindent\textbf{\csname authorName\aubioCnt\endcsname}\csname authorDetails\aubioCnt\endcsname \par\bigskip
      \ifnum\@tempcnta < \theaubio
   \repeat
   }
\makeatother

\usepackage{tabulary,xcolor}
\usepackage{amsfonts,amsmath,amssymb}
\usepackage[T1]{fontenc}
\makeatletter
\let\save@ps@pprintTitle\ps@pprintTitle
\def\ps@pprintTitle{\save@ps@pprintTitle\gdef\@oddfoot{\footnotesize\itshape \null\hfill\today}}
\def\hlinewd#1{%
  \noalign{\ifnum0=`}\fi\hrule \@height #1%
  \futurelet\reserved@a\@xhline}

\AtBeginDocument{\ifNAT@numbers \biboptions{sort&compress}\fi}
\makeatother

\usepackage{ifluatex}
\ifluatex
\usepackage{fontspec}
\defaultfontfeatures{Ligatures=TeX}
\usepackage[]{unicode-math}
\unimathsetup{math-style=TeX}
\else
\usepackage[utf8]{inputenc}
\fi
\ifluatex\else\usepackage{stmaryrd}\fi

\usepackage{url,multirow,morefloats,floatflt,cancel,tfrupee}
\makeatletter

\AtBeginDocument{\@ifpackageloaded{textcomp}{}{\usepackage{textcomp}}}
\makeatother
\usepackage{colortbl}
\usepackage{xcolor}
\usepackage{pifont}
\usepackage[nointegrals]{wasysym}
\urlstyle{rm}
\makeatletter

\def\mcWidth#1{\csname TY@F#1\endcsname+\tabcolsep}

\def\cAlignHack{\rightskip\@flushglue\leftskip\@flushglue\parindent\z@\parfillskip\z@skip}
\def\rAlignHack{\rightskip\z@skip\leftskip\@flushglue \parindent\z@\parfillskip\z@skip}

\usepackage{ifxetex}
\ifxetex\else\if@twocolumn\usepackage{dblfloatfix}\fi\fi

\AtBeginDocument{
\expandafter\ifx\csname eqalign\endcsname\relax
\def\eqalign#1{\null\vcenter{\def\\{\cr}\openup\jot\m@th
  \ialign{\strut$\displaystyle{##}$\hfil&$\displaystyle{{}##}$\hfil
      \crcr#1\crcr}}\,}
\fi
}

\AtBeginDocument{%
  \@ifpackageloaded{endfloat}%
   {\renewcommand\efloat@iwrite[1]{\immediate\expandafter\protected@write\csname efloat@post#1\endcsname{}}}{\newif\ifefloat@tables}%
}%

\def\BreakURLText#1{\@tfor\brk@tempa:=#1\do{\brk@tempa\hskip0pt}}
\let\lt=<
\let\gt=>
\def\processVert{\ifmmode|\else\textbar\fi}

\@ifundefined{subparagraph}{
\def\subparagraph{\@startsection{paragraph}{5}{2\parindent}{0ex plus 0.1ex minus 0.1ex}%
{0ex}{\normalfont\small\itshape}}%
}{}

\newcommand\role[1]{\unskip}
\newcommand\aucollab[1]{\unskip}

\@ifundefined{tsGraphicsScaleX}{\gdef\tsGraphicsScaleX{1}}{}
\@ifundefined{tsGraphicsScaleY}{\gdef\tsGraphicsScaleY{.9}}{}
\def\checkGraphicsWidth{\ifdim\Gin@nat@width>\linewidth
	\tsGraphicsScaleX\linewidth\else\Gin@nat@width\fi}

\def\checkGraphicsHeight{\ifdim\Gin@nat@height>.9\textheight
	\tsGraphicsScaleY\textheight\else\Gin@nat@height\fi}

\def\fixFloatSize#1{}
\let\ts@includegraphics\includegraphics

\def\inlinegraphic[#1]#2{{\edef\@tempa{#1}\edef\baseline@shift{\ifx\@tempa\@empty0\else#1\fi}\edef\tempZ{\the\numexpr(\numexpr(\baseline@shift*\f@size/100))}\protect\raisebox{\tempZ pt}{\ts@includegraphics{#2}}}}

\AtBeginDocument{\def\includegraphics{\@ifnextchar[{\ts@includegraphics}{\ts@includegraphics[width=\checkGraphicsWidth,height=\checkGraphicsHeight,keepaspectratio]}}}

\DeclareMathAlphabet{\mathpzc}{OT1}{pzc}{m}{it}

\def\URL#1#2{\@ifundefined{href}{#2}{\href{#1}{#2}}}

\def\UrlOrds{\do\*\do\-\do\~\do\'\do\"\do\-}%
\g@addto@macro{\UrlBreaks}{\UrlOrds}

\@ifundefined{quoteAttrib}
	{}
	{}

\@ifundefined{titlequoteAttrib}
	{}{}

\newenvironment{title-quote}
	 {\list{}{\fontsize{10pt}{12pt}\selectfont\leftmargin.5in\itshape\rightmargin\leftmargin}%
  \item\relax}
  {\endlist}

\makeatother

\emergencystretch 15pt


\def\R{\mathbb{R}}


\def\bx{\boldsymbol{x}}
\newcommand{\norm}[1]{\left\Vert#1\right\Vert}


\numberwithin{equation}{section}
\newtheorem{thm}{Theorem}[section]
\newtheorem{prob}{Problem}[section]

\newtheorem{lemma}{Lemma}[section]
\newproof{proof}{Proof}
\newproof{pot}{Proof of Theorem \ref{thm2}}

\begin{document}

\begin{frontmatter}
	
\title{Zastavnyi Operators and Positive Definite Radial Functions}

\author[aff63c5b8f064b9863807f549819f278608]{Tarik Faouzi}
\ead{tfaouzi@ubiobio.cl}
\author[aff980d40830c7f8bb81f02e241db520037]{Emilio Porcu \corref{contrib-49136fc397578fcb577fd032b6140a3b}}
\ead{emilio.porcu@newcastle.ac.uk}\cortext[contrib-49136fc397578fcb577fd032b6140a3b]{Corresponding author.}
\author[aff974d3bf2cad98dbdf933dc09b59dbb8d]{Moreno Bevilacqua}
\ead{Moreno.bevilacqua@uv.cl}
\author[aff2d0cc33cd806ae5c9cbf7f7b3f92f9ad]{Igor Kondrashuk}
\ead{igor.kondrashuk@ubiobio.cl}

\address[aff63c5b8f064b9863807f549819f278608]{ Department of Statistics\unskip, Applied Mathematics Research Group
    \unskip, University of Bio Bio\unskip, Chile. \\}
\address[aff980d40830c7f8bb81f02e241db520037]{School of Mathematics and Statistics\unskip, University of Newcastle\unskip, UK\unskip. Department of Mathematics and University of Atacama\unskip, Copiap\'o\unskip, and Millennium Nucleus Center
for the Discovery of Structures
in Complex Data\unskip, Chile. \\
}
  	
\address[aff974d3bf2cad98dbdf933dc09b59dbb8d]{University of Valparaiso\unskip,
Department of Statistics\unskip. Millennium Nucleus Center
for the Discovery of Structures
in Complex Data\unskip, Chile. \\}
  	
\address[aff2d0cc33cd806ae5c9cbf7f7b3f92f9ad]{Grupo de Matem\'atica Aplicada\unskip, Departamento de Ciencias B\'asicas\unskip, Universidad del B \'io-B \'io\unskip, Campus Fernando
May, Av. Andres Bello 720, Casilla 447\unskip, Chill\'an\unskip, Chile \\
   }

\begin{abstract}
we consider a  new operator acting on rescaled weighted differences between two members of  the class  $\Phi_d$ of positive definite radial functions.
In particular, we  study the positive  definiteness  of the operator for  the  Mat{\'e}rn,  Generalized Cauchy
and  Wendland families.
\end{abstract}
\begin{keyword}
   Completely Monotonic\sep Fourier Transforms\sep Positive Definite\sep Radial Functions
\end{keyword}

\end{frontmatter}

\section{Introduction}
Positive definite functions are fundamental to many branches of mathematics as well as probability theory, statistics and machine learning amongst others.
There has been an increasing interest in positive definite functions in $d$-dimensional Euclidean spaces ($d$ is a positive integer throughout), and the reader is referred to  \cite{Dale14}, \cite{Sc38}, \cite{Schaback:2011}, \cite{wu95} and  \cite{Wendland:1995}.

 This paper is concerned with the class $\Phi_d$ of continuous functions $\phi:[0, \infty) \mapsto \R$ such that $\phi(0)=1$ and the function $\bx \mapsto \phi(\| \bx \|)$ is positive definite in $\R^d$. The class $\Phi_d$ is nested, with the strict inclusion relation:
$$ \Phi_1 \supset \Phi_2 \supset \cdots \supset \Phi_d \supset \cdots \supset \Phi_{\infty}:= \bigcap_{d \ge 1} \Phi_d. $$
The classes $\Phi_d$ are convex cones that are closed under product, non-negative linear combinations, and pointwise convergence. Further, for a given member $\phi$ in $\Phi_d$, the rescaled function $\phi(\cdot/\alpha)$ is still in $\Phi_d$ for any given $\alpha$. We make explicit emphasis on this fact because it will be repeatedly used subsequently.

For any nonempty set $A\subseteq\R^d$, we call $C(A)$ the set of continuous functions from $A$ into $\R$.
For $p$ a positive integer, let $\boldsymbol{\theta} \in \Theta \subset \R^p$ and let $\{ \phi(\cdot; \boldsymbol{\theta}), \; \boldsymbol{\theta} \in \Theta \}$ be a parametric family belonging to the class $\Phi_d$. For $\varepsilon \in \R$, $\varepsilon \neq 0$ and $0<\beta_1<\beta_2$ with $\beta_i$, $i=1,2$ two scaling parameters, we define the Zastavnyi operator
$K_{\varepsilon; \boldsymbol{\theta};\beta_2,\beta_1}[\phi]: \Phi_d \mapsto C(\R)$ by
\def\btheta{\boldsymbol{\theta}}
\begin{equation}
\label{zastavnyi1}
K_{\varepsilon; \boldsymbol{\theta};\beta_2,\beta_1}[\phi](t) = \frac{\beta_2^{\varepsilon} \phi \left ( \frac{t}{\beta_2};\btheta\right )-\beta_1^{\varepsilon} \phi \left ( \frac{t}{\beta_1};\btheta\right ) }{\beta_2^{\varepsilon}-\beta_1^{\varepsilon}}, \qquad t \ge 0,
\end{equation}
with $K_{\varepsilon; \boldsymbol{\theta};\beta_2,\beta_1}[\phi](0)=1$. Here, by $\beta_i^{\varepsilon}$ we mean  $\beta_i$ raised to the power of  $\varepsilon$. It can be namely checked that
 $$K_{\varepsilon; \boldsymbol{\theta};\beta_2,\beta_1}[\phi](0) = \frac{\beta_2^{\varepsilon} \phi \left ( \frac{0}{\beta_2};\btheta\right )-\beta_1^{\varepsilon} \phi \left ( \frac{0}{\beta_1};\btheta\right ) }{\beta_2^{\varepsilon}-\beta_1^{\varepsilon}}=\frac{\beta_2^{\varepsilon} -\beta_1^{\varepsilon} }{\beta_2^{\varepsilon}-\beta_1^{\varepsilon}}=1.$$
A motivation for studying positive definiteness of the radial functions $\R^d \ni \mathbf{x} \mapsto K_{\varepsilon; \boldsymbol{\theta};\beta_2,\beta_1}[\phi](\|\mathbf{x}\|)$
comes from the problem of monotonicity of the so--called  microergodic parameter
of specific parametric families \citep{Bevilacqua_et_al:2018,Bevilacqua:2018ab} when studying the asymptotic properties of the maximum likelihood estimation under fixed domain asymptotics.
The operator (\ref{zastavnyi1}) is  a generalization of the operator proposed in \cite{Porcu:Zastavnyi:Xesbaiat}  where $\varepsilon$ is assumed to be positive.
Our problem can be formulated as follows:
\begin{prob}\label{PP}
Let $d$ and $q$ be  positive integers.
Let $\{ \phi(\cdot;\boldsymbol{\theta} ), \; \boldsymbol{\theta} \in \Theta \subset \R^q \}$ be a family of functions belonging to the class  $\Phi_d$.
Find the conditions on $\varepsilon \in \R$, $\varepsilon \neq 0$ and $\boldsymbol{\theta}$, such that $K_{\varepsilon; \boldsymbol{\theta};\beta_2,\beta_1}[\phi]$ as defined through (\ref{zastavnyi1}) belongs to the class $\Phi_n$
for some $n=1,2,\ldots$
 for given $0<\beta_1<\beta_2$.
\end{prob}
We first note that Problem \ref{PP} has at least two  possible solutions. Indeed, direct inspection shows
$$\lim_{\varepsilon \to +\infty} K_{\varepsilon; \boldsymbol{\theta};\beta_2,\beta_1}[\phi](t)  = \phi \left ( \frac{t}{\beta_1};\btheta\right ), \quad
\lim_{\varepsilon \to -\infty} K_{\varepsilon; \boldsymbol{\theta};\beta_2,\beta_1}[\phi](t)  = \phi \left ( \frac{t}{\beta_2};\btheta\right ), \qquad t\geq0,$$
where the convergence is pointwise in $t$.

The  positive definiteness  of  (\ref{zastavnyi1}), assuming $\varepsilon>0$, has been studied  in \cite{Porcu:Zastavnyi:Xesbaiat} when $\phi$ belongs to
the Buhmann class \citep{Buhmann:2001x}. An important special case of the Buhmann class   is the
the  Generalized Wendland family
  \citep{Gneiting:2002b}. For  $\kappa>0$,
   we define the class  ${\cal GW}: [0,\infty)  \to \R$ as:

  \begin{equation}\label{eq:wendland}
 {\cal GW}(t;\kappa,\mu)= \begin{cases}   \frac{\int_{ t}^{1} u(u^2- t^2)^{\kappa-1} (1-u)^{\mu}\,\rm{d}u}{B(2\kappa,\mu+1)}  ,& 0 \leq  t < 1,\\ 0,& t \geq 1, \end{cases}
\end{equation}
  and,  for $\kappa=0$, by continuity we have
    \begin{equation}\label{eq:wendland1}
 {\cal GW}(t;0,\mu)= \begin{cases}  (1-t)^{\mu} ,& 0 \leq  t < 1,\\ 0,& t \geq 1. \end{cases}
\end{equation}
The function ${\cal GW}(t; \kappa,\mu)$ is a member   of the class $\Phi_{d}$ if and only if $\mu\geq 0.5(d+1)+\kappa$ \citep{Zastavnyi:2002}.
\cite{Porcu:Zastavnyi:Xesbaiat} found that if $\phi(\cdot; \btheta)= {\cal GW}(\cdot; {\kappa,\mu})$ and  $\varepsilon>0$
  then
  $K_{\varepsilon; \kappa,\mu;\beta_2,\beta_1}[{\cal GW}](t)$ is positive definite
  if  $\mu \geq (d + 7)/2 + \kappa$ and $\epsilon\geq 2\kappa+1$.





This paper is especially interested  to the solution of Problem \ref{PP}  when considering  two celebrated parametric families:
\begin{description}
\item[The Mat{\'e}rn family.] In this case,  $\phi(\cdot; \btheta)= {\cal M}(\cdot; \nu)$, so that the $\boldsymbol{\theta}= \nu$, a scalar and $\Theta=(0,\infty)$,
with
\begin{equation}
\label{matern} {\cal M}(t; \nu) = \frac{2^{1-\nu}}{\Gamma(\nu)} t^{\nu} {\cal K}_{\nu}(t), \qquad t \ge 0,
\end{equation}
where ${\cal K}_{\nu}$ is the modified Bessel function of the second kind of order $\nu>0$ \citep{Abra:Steg:70}. The functions ${\cal M}(\cdot; \nu)$, $\nu>0$, belong to  the class $\Phi_{\infty}$ \citep{Stein:1999}. 
\item[The Generalized Cauchy family.] In this case  $\phi(\cdot; \btheta)= {\cal C}(\cdot; {\delta,\lambda})$, so that $\boldsymbol{\theta}=(\delta,\lambda)^{\top}$, with $\top$ denoting the transpose operator. Here, $\Theta=(0,2] \times (0,\infty)$, and
\begin{equation} \label{cauchy}
{\cal C}(t; {\delta,\lambda}) = \left ( 1+ t^{\delta} \right )^{-\lambda/\delta}, \qquad t \ge 0.
\end{equation}
 The functions ${\cal C}(\cdot; \delta, \lambda)$ belong to the class $\Phi_{\infty}$ \citep{GneiSchla04}.
\end{description}
Additionally, we provide a solution to Problem \ref{PP} when  $\phi(\cdot; \btheta)= {\cal GW}(\cdot; {\kappa,\mu})$, $\btheta=(\kappa,\mu)^{\top}$, $\Theta=[0,\infty) \times (0,\infty)$, assuming $\varepsilon <0$.

To give an idea of how the operator $K_{\varepsilon; \boldsymbol{\theta};\beta_2,\beta_1}[\phi](t)$ acts on $\phi(t,\boldsymbol{\theta})$ for given $0<\beta_1<\beta_2$
when $\phi$ is the Mat{\'e}rn family,
Figure \ref{fig:fcovt222} (A) compares
$K_{\varepsilon;0.5;\beta_2,\beta_1}[{\cal M}](t)$ with ${\cal M}(t/\beta_1;0.5)$ and
 ${\cal M}(t/\beta_2;0.5)$ when  $\beta_1=0.075$, $\beta_2=0.15$, $\varepsilon=1$ (red line) and $\varepsilon=-2$ (blue line). We note that the behaviour at the origin of $K_{1;0.5;\beta_2,\beta_1}[{\cal M}](t)$  changes drastically with respect to
 the behaviour at the origin of  ${\cal M}(\cdot;0.5)$. Moreover, $K_{-2;0.5;\beta_2,\beta_1}[{\cal M}](t)$ can attain negative values. It turns out  from Theorem \ref{theo11}  that  $K_{1;0.5;\beta_2,\beta_1}[{\cal M}](t) \in \Phi_{\infty}$ and   $K_{-2;0.5;\beta_2,\beta_1}[{\cal M}](t) \in \Phi_2$.

 A similar graphical representation is given in Figure \ref{fig:fcovt222} (B)  when $\phi\equiv {\cal C}$,  the Cauchy family in (\ref{cauchy}). In this case, we consider   $\varepsilon =-0.7, 1.25$, $\delta=0.6$, $\lambda=2.5$,  $\beta_1=0.2$, $\beta_2=0.3$. Note that,under this setting, $K_{-1.25;0.6,2.5;\beta_2,\beta_1}[{\cal C}](t)$ attains negative values as well.
 It turns out  from from Theorem \ref{theo22} $K_{\varepsilon;2.5,0.6;\beta_2,\beta_1}[{\cal C}](t)\in \Phi_{\infty}$ for $\varepsilon=-0.7$ and $1.25$.

\noindent Finally, Figure \ref{fig:fcovt222} (C) compares
$K_{\varepsilon;0,4.5;\beta_2,\beta_1}[{\cal GW}](t)$ with ${\cal GW}(t/\beta_1;0,4.5)$ and
 ${\cal GW}(t/\beta_2;0,4.5)$ with $\beta_2=0.6$, $\beta_1=0.4$ when $\varepsilon=1$ (red line) and $\varepsilon=-2$ (blue line). As for the Mat{\'e}rn case, the behaviour at the origin of $K_{\varepsilon;0,4.5;\beta_2,\beta_1}[{\cal GW}](t)$ changes neatly with respect to
 the behaviour at the origin of  ${\cal GW}(\cdot;0,4.5)$. 
 Moreover $K_{-2;0,4.5;\beta_2,\beta_1}[{\cal GW}](t)$ can reach negative values. It turns out  from Theorem \ref{theo111}  that     $K_{-2;0,4.5;\beta_2,\beta_1}[{\cal GW}](t)$
 belongs to
 $\Phi_2$.

\begin{figure}[h!]
\begin{tabular}{ccc}
   \includegraphics[width=5.2cm, height=6.5cm]{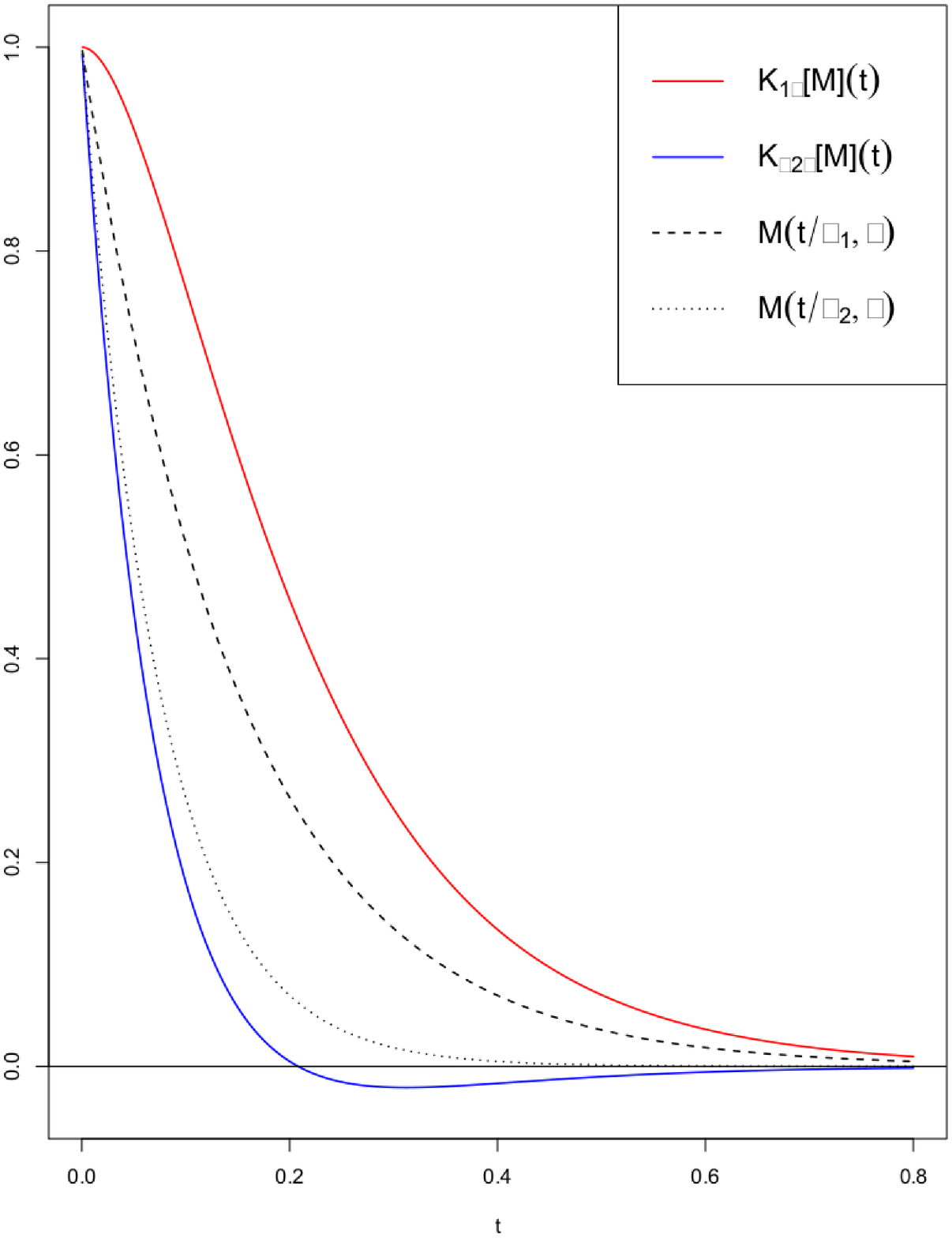}& \includegraphics[width=5.2cm, height=6.5cm]{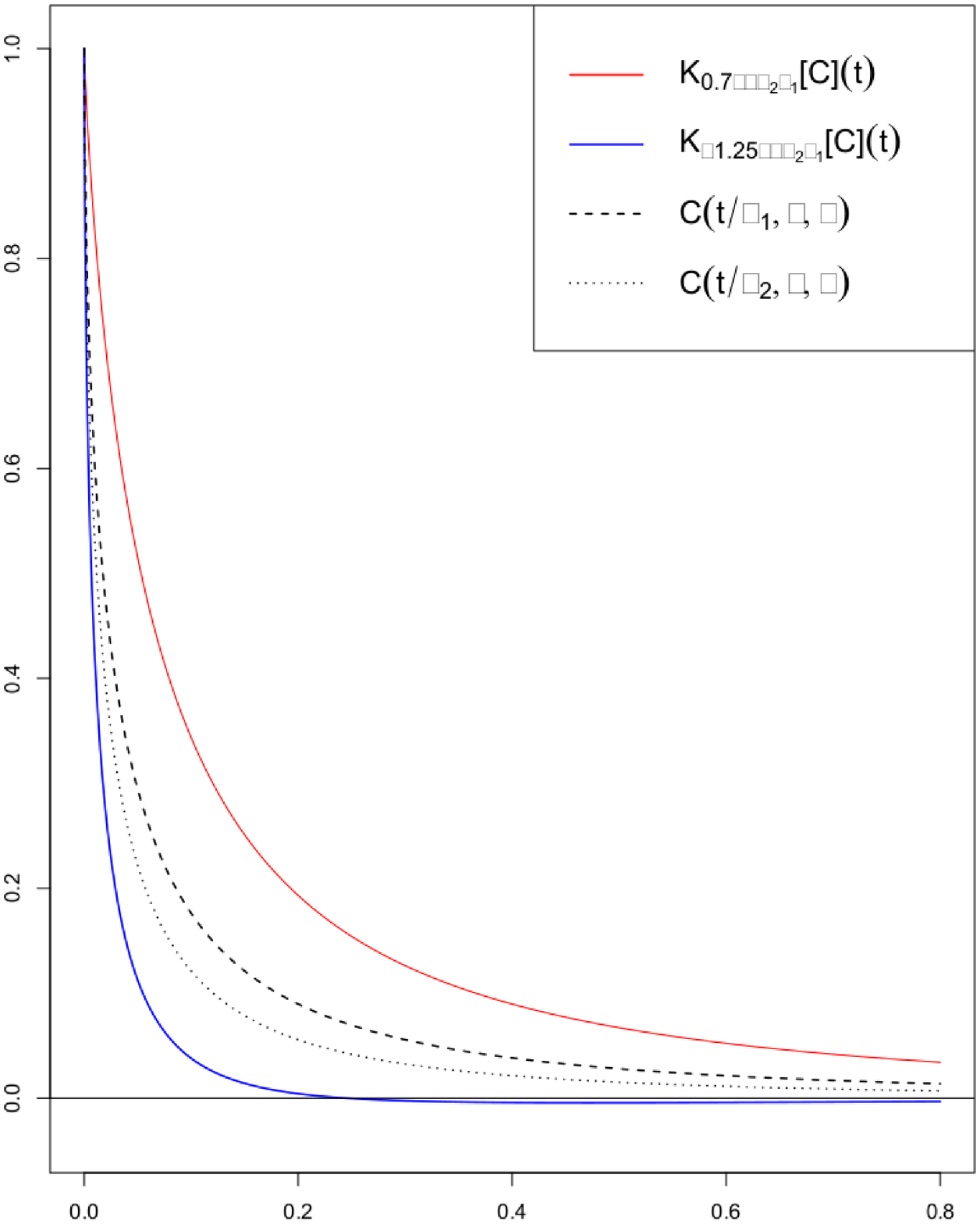}& \includegraphics[width=5.2cm, height=6.5cm]{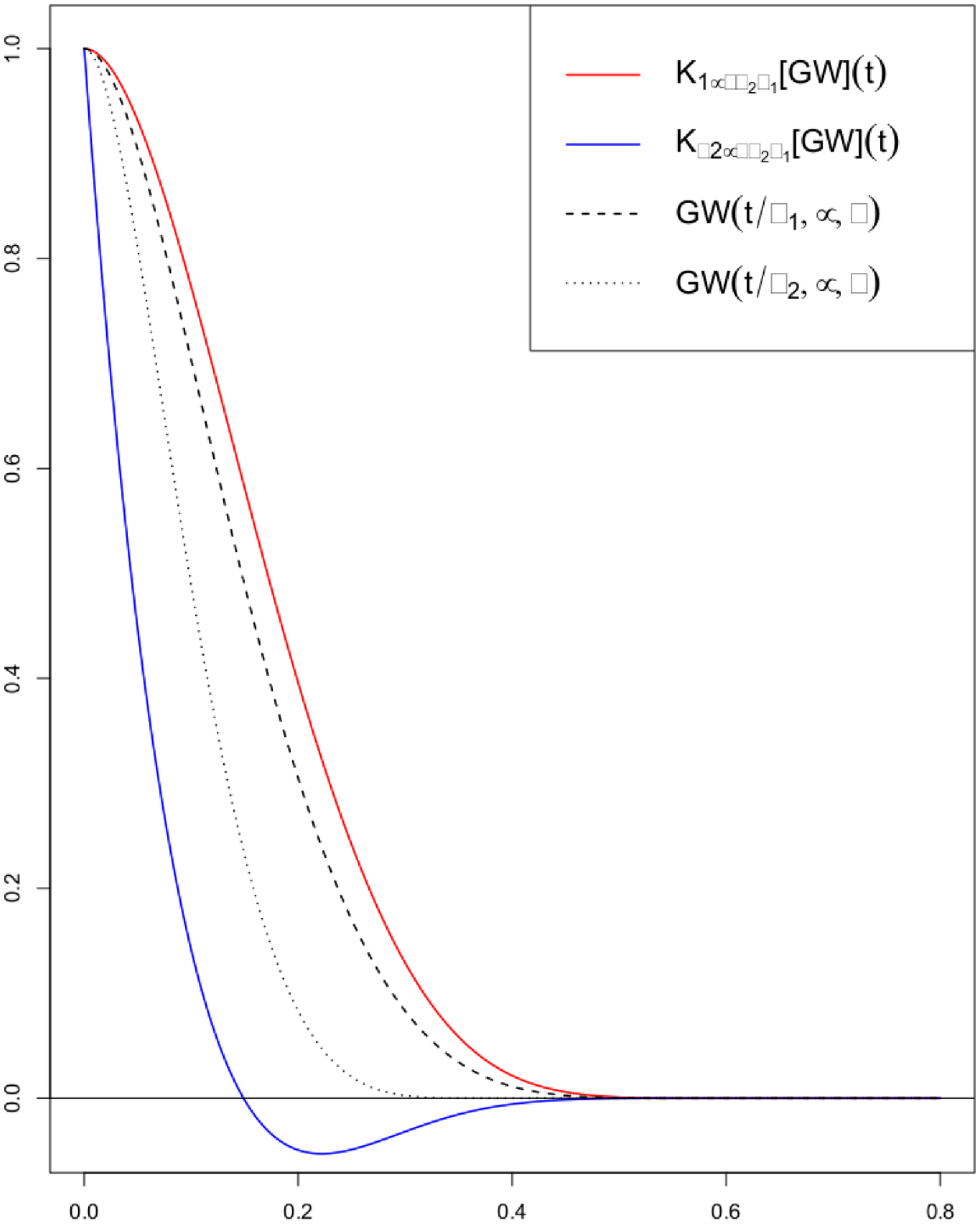}    \\
   (A)&(B)&(C)\\
\end{tabular}
\caption{From left to right: (A)  comparison  of $K_{\varepsilon;\nu;\beta_2,\beta_1}[{\cal M}](t)$
when $\varepsilon=1$ (red line) and  $\varepsilon=-2$ (blue line) with ${\cal M}(t/\beta_1;\nu)$ and
 ${\cal M}(t/\beta_2;\nu)$  when $\nu=0.5$, $\beta_1=0.075$, $\beta_2=0.15$.
(B) comparison  of $K_{\varepsilon;\delta,\lambda;\beta_2,\beta_1}[{\cal C}](t)$
when $\varepsilon=0.7$ (red line) and  $\varepsilon=-1.25$ (blue line) with ${\cal C}(t/\beta_1;\delta,\lambda)$ and
 ${\cal C}(t/\beta_2;\delta,\lambda)$  when $\delta=0.6$, $\lambda=2.5$,  $\beta_1=0.2$, $\beta_2=0.3$.
 (C) comparison  of $K_{\varepsilon;\mu,\kappa;\beta_2,\beta_1}[{\cal GW}](t)$
when $\varepsilon=1$ (red line) and  $\varepsilon=-2$ (blue line) with ${\cal GW}(t/\beta_1;\mu,\kappa)$ and
${\cal GW}(t/\beta_2;\mu,\kappa)$  when $\mu=4.5$, $\kappa=0$,  $\beta_1=0.4$, $\beta_2=0.6$.
\label{fig:fcovt222}}
\end{figure}

 These three examples show that   operator  (\ref{zastavnyi1}) can change  substantially the features of given families $\phi$
 in terms of both differentiability at the origin and negative correlations.
The solution of Problem \ref{PP}  for the Mat{\'e}rn and Cauchy families, passes necessarily through the specification of the properties of the radial Fourier transforms of the radially symmetric functions ${\cal M}(\| \cdot \|; \nu)$ and ${\cal C}(\|\cdot\|; \delta, \lambda)$ in $\R^d$. For the Mat{\'e}rn family, such a Fourier transform is available in closed form. For the Generalized Cauchy family we obtain the Fourier transform  as series expansions generalizing a  recent result obtained by  \cite{Lim2010}. The plan of the paper is the following: Section 2 contains the necessary preliminaries and background. Section 3 gives the main results of this paper.

\section{Preliminaries}

We start with some expository material. A function $f: \R^d \to \R$ is called positive definite if, for any collection $\{ a_k \}_{k=1}^N \subset \R$ and points $\bx_1, \ldots, \bx_N \in \R^d$, the following holds:
$$ \sum_{k=1}^N \sum_{h=1}^N a_k f(\bx_k-\bx_h ) a_h \ge 0. $$
By Bochner's theorem, continuous positive definite functions are the Fourier transforms of positive and bounded measures, that is
\def\bomega{\boldsymbol{\omega}}
\begin{equation}
f(\bx) = \int_{\R^d } {\rm e}^{\mathsf{i} \langle \bx, \boldsymbol{z} \rangle} F ({\rm d} \boldsymbol{z}),  \qquad \bx \in \R^d,
\end{equation} where $\langle \cdot,\cdot\rangle$ denotes inner product and where $\mathsf{i}$ is the complex number such that $\mathsf{i}^2=-1$.
Additionally, if $f(\bx)=\phi(\|\bx\|)$ for some continuous function defined on the positive real line,  Schoenberg's theorem \citep[][with the references therein]{Dale14} shows that $f$ is positive definite if and only if its {\em radial part} $\phi$  can be written as
\begin{equation}
\label{schoenberg}  \phi(t) = \int_{[0,\infty)} \Omega_d(\xi t) G_d({\rm d} \xi),  \qquad t \ge 0,
\end{equation}
where $G_d$ is a positive and bounded measure, and where $$ \Omega_{d}(t) = t^{-(d-2)/2} J_{(d-2)/2}(t), \qquad t \ge 0, $$
where $J_{\nu}$ defines a Bessel function of order $\nu$.  If $\phi(0)=1$, then $G_d$ is a probability measure \citep{Dale14}. Classical Fourier inversion in concert with Bochner's theorem shows that the function $\phi$ belongs to the class $\Phi_d$ if and only if it admits the representation (\ref{schoenberg}), and this in turn happens if and only if the function $\widehat{\phi}_d: [0,\infty) \to  [0,\infty)$, defined through
\begin{equation} \label{FT}
 \widehat{\phi}_d(z):= {\cal F}_d[\phi(t)](z)= \frac{z^{1-d/2}}{(2 \pi)^d} \int_{0}^{\infty} t^{d/2} J_{d/2-1}(tz)  \phi(t) {\rm d} t, \qquad z \ge 0,
  \end{equation}
is nonnegative and such that $\int_{[0,\infty)} \widehat{\phi}_d(z) z^{d-1} {\rm d} t < \infty$. Note that we intentionally put a subscript $d$ into $G_d$ and $\widehat{\phi}_d$ to emphasize the dependence on the dimension $d$ corresponding to the class $\Phi_d$ where $\phi$ is defined. This is explicitly stated in \cite{Dale14}, where it is explained that for any member of the class $\Phi_d$ there exists at least $G_1, \ldots, G_d$ non negative bounded measures in the representation (\ref{schoenberg}). Hence the term $d$-Schoenberg measures proposed therein. Finally, a convergence argument as much as in Schoenberg \citep{Sc38} shows that $\phi \in \Phi_{\infty}$ if and only if
\begin{equation}\label{Boch}
\phi(\sqrt{t}) = \int_{[0,\infty)} {\rm e}^{-\xi t } G({\rm d} \xi), \qquad t \ge 0,
\end{equation}
for $G$ positive, nondecreasing and bounded. Thus, $\psi:=\phi(\sqrt{.})$ is the Laplace transform of $G$, which shows, in concert with Bernstein's theorem, that the function $\psi$ is completely monotonic on the positive real line, that is $\psi$ is infinitely often differentiable and $(-1)^k \psi^{(k)}(t) \ge 0$, $t>0$, $k \in \mathbb{N}$. Here, $\psi^{(k)}$ denotes the $k$-th order derivative of $\psi$, with $\psi^{(0)}=\psi $ by abuse of notation.

For a given $d \in \mathbb{N}$, direct inspection shows that for any scale parameter $\beta>0$, the Fourier transform (\ref{FT}) of $\phi(\cdot/\beta)$ is identically equal to $\beta^{d} \widehat{\phi}_{d}( \beta \cdot)=:\widehat{\phi}_{d,\beta}(\cdot)$, and we shall repeatedly make use of this fact for the results following subsequently.
It is well known that the Fourier transform of the Mat\'ern covariance function ${\cal M}$ in Equation (\ref{matern}) is given by  \citep{Stein:1999}
\begin{equation} \label{stein10}
\widehat{{\cal M}}_{d,\beta}(z;\nu)= \frac{\Gamma(\nu+d/2)}{\pi^{d/2} \Gamma(\nu)}
\frac{\beta^d}{(1+\beta^2z^2)^{\nu+d/2}}
, \qquad z \ge 0.
\end{equation}


An ingenious approach in \cite{LiTe09} shows that the Fourier transform of Generalized Cauchy covariance function ${\cal C}$ in Equation (\ref{cauchy}) can be written as
\begin{equation} \label{stein1}
\widehat{{\cal C}}_{d,\beta}(z;\delta,\lambda) =-\frac{
\beta^{d/2+1}z^{-d}}{2^{d/2-1}\pi^{d/2+1}}\Im\left(\int_0^\infty\frac{\mathcal{K}_{(d-2)/2}(
\beta t)}{(1+e^{i\frac{\pi\delta}{2}}(t/z)^{\delta})^{\lambda/\delta}}t^{d/2} {\rm d}t\right), \qquad z > 0,
\end{equation} where $\Im$ denotes the imaginary part of a complex argument,  ${\cal K}_{(d-2)/2}$ is the modified Bessel function of the second kind of order $(d-2)/2$ and $\beta$ is a scale parameter. A closed  form expression for $\widehat{{\cal C}}_{d,\beta}$ has been elusive for longtime.  \cite{Lim2010} shed some light for this problem giving a infinite series representation of the spectral density
under some specific restriction of the parameters.
The result following below
 shows that the series representation given in   \cite{Lim2010} is valid without any restriction on the parameters.

\begin{thm}\label{the3}
 Let ${\cal C}(\cdot;\delta,\lambda)$ be the Generalized Cauchy covariance function
 as defined in Equation (\ref{cauchy}).
   Then, it is true that
 \begin{equation}\label{spectrGW_2}
\begin{aligned}
\widehat{\cal C}_{d,\beta}(z;\delta,\lambda)&=
\frac{z^{-d}}{\pi^{d/2}}\frac{1}{\Gamma(\frac{\lambda}{\delta})} \sum_{n=0}^\infty \frac{(-1)^n}{n!}
\frac{\Gamma(\frac{\lambda}{\delta} + n)\Gamma(d/2-(\frac{\lambda}{\delta}+n)\delta/2)}{\Gamma((\frac{\lambda}{\delta}+n)\delta/2)}\left(\frac{z\beta}{2}\right)^{(\frac{\lambda}{\delta}+n)\delta} \\
&+\frac{z^{-d}}{\pi^{d/2}\delta}\frac{2}{\Gamma(\frac{\lambda}{\delta})} \sum_{n=0}^\infty \frac{(-1)^n}{n!}
\frac{\Gamma\left(\displaystyle{\frac{2n+d}{\delta}} \right)\Gamma\left(\frac{\lambda}{\delta}-  \displaystyle{\frac{2n+d}{\delta}}\right)}{\Gamma(n+d/2)}\left(\frac{z\beta}{2}\right)^{2n+d},
\end{aligned}
\end{equation}
with $z>0$, where $\delta \in (0,2)$ and $\lambda >0$.
\end{thm}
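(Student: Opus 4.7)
The plan is to evaluate the integral in (\ref{stein1}) by the Mellin--Barnes method. First, I would replace the factor $(1+e^{i\pi\delta/2}(t/z)^{\delta})^{-\lambda/\delta}$ in the integrand by its Mellin--Barnes integral
$$
(1+x)^{-\lambda/\delta}=\frac{1}{2\pi i\,\Gamma(\lambda/\delta)}\int_{c-i\infty}^{c+i\infty}\Gamma\!\left(\frac{\lambda}{\delta}+w\right)\Gamma(-w)\,x^{w}\,dw,\qquad -\lambda/\delta<c<0,
$$
with $x=e^{i\pi\delta/2}(t/z)^{\delta}$; this is admissible because $\delta\in(0,2)$ keeps $e^{i\pi\delta/2}$ off the branch cut. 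Interchanging the order of integration via Fubini, the inner $t$-integral reduces to the classical Mellin transform of the Bessel ${\cal K}$ function, which equals $2^{d/2+\delta w-1}\beta^{-d/2-\delta w-1}\Gamma((d+\delta w)/2)\Gamma(\delta w/2+1)$.

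After absorbing the $\beta$ and $2$ powers into $(z\beta/2)^{-\delta w}$, the spectral density becomes
$$
\widehat{\cal C}_{d,\beta}(z;\delta,\lambda)=-\frac{z^{-d}}{\pi^{d/2+1}\Gamma(\lambda/\delta)}\,\Im\!\left[\frac{1}{2\pi i}\int_{c-i\infty}^{c+i\infty}F(w)\,dw\right],
$$
where $F(w)=\Gamma(\lambda/\delta+w)\Gamma(-w)\,e^{i\pi\delta w/2}(z\beta/2)^{-\delta w}\Gamma((d+\delta w)/2)\Gamma(\delta w/2+1)$. The second step is to shift the contour to the left and collect residues. Stirling estimates combined with $\delta\in(0,2)$ ensure that the integrand decays exponentially on the horizontal arcs, so the line integral equals the sum of residues at the three sequences of simple poles $w_n^{(1)}=-\lambda/\delta-n$, $w_n^{(2)}=-(d+2n)/\delta$, and $w_n^{(3)}=-2(n+1)/\delta$.

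At $w_n^{(1)}$ and $w_n^{(2)}$ I would simplify the products of complementary Gamma factors via the reflection formula $\Gamma(z)\Gamma(1-z)=\pi/\sin(\pi z)$; the phase-sine combinations $e^{-i\pi(\lambda+n\delta)/2}/\sin(\pi(\lambda+n\delta)/2)$ and $e^{-i\pi d/2}/\sin(\pi d/2)$ that arise both have imaginary part equal to $-1$, and after collecting constants and powers of $z\beta/2$ they reproduce the two series in (\ref{spectrGW_2}). At $w_n^{(3)}$, however, the phase collapses to $e^{-i\pi(n+1)}=(-1)^{n+1}$ and every remaining factor is real, so these residues are purely real and drop out of the imaginary part; this is precisely why only two of the three potential residue series survive in the final answer.

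The main technical obstacle will be the analytic justification of the Mellin--Barnes manipulations: absolute convergence of the double integral to validate Fubini, and decay of $F(w)$ on the shifted horizontal arcs so that the semicircular contribution vanishes in the limit. Both follow from standard Stirling-type estimates, but the strict inequality $\delta<2$ is essential for controlling the factor $e^{i\pi\delta w/2}$ on vertical lines. A secondary delicacy is sign bookkeeping: the explicit minus sign in (\ref{stein1}), the contour orientation, the $(-1)^n$ factors produced by the residues and by reflection, and the $-1$ imaginary parts of the trigonometric factors must combine correctly so that the final spectral density is real and positive.
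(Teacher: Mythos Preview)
Your argument is correct, but it follows a different path from the paper's. You start from the Lim--Teo representation~(\ref{stein1}), insert a Mellin--Barnes integral with the complex argument $x=e^{i\pi\delta/2}(t/z)^{\delta}$, evaluate the resulting $t$-integral via the Mellin transform of $\mathcal{K}_{(d-2)/2}$, and then close the contour. This produces three pole families, and you correctly observe that the residues at $w_n^{(3)}=-2(n+1)/\delta$ are real and hence annihilated by the $\Im$ in~(\ref{stein1}).

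The paper bypasses~(\ref{stein1}) altogether. It applies the Mellin--Barnes representation directly inside the $d$-dimensional Fourier integral, with the \emph{real} argument $\|\boldsymbol{x}\|^{\delta}$, swaps the order of integration, and then invokes the distributional identity
\[
\int_{\R^d} e^{\mathsf{i}\langle\boldsymbol{z},\boldsymbol{x}\rangle}\|\boldsymbol{x}\|^{u\delta}\,\mathrm{d}\boldsymbol{x}
=\frac{2^{d+u\delta}\pi^{d/2}\,\Gamma(d/2+u\delta/2)}{\Gamma(-u\delta/2)\,\|\boldsymbol{z}\|^{d+u\delta}}.
\]
The resulting contour integrand carries $\Gamma(d/2+u\delta/2)/\Gamma(-u\delta/2)$ rather than your product $\Gamma(d/2+\delta w/2)\Gamma(1+\delta w/2)$, so the would-be poles at $-2(n+1)/\delta$ are automatically cancelled by the zeros of $1/\Gamma(-u\delta/2)$. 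Only the two families $u=-\lambda/\delta-n$ and $u=-(2n+d)/\delta$ survive, and the answer is manifestly real from the outset; no imaginary part needs to be taken and no sign bookkeeping of the kind you describe is required. Your route makes explicit \emph{why} the third family is invisible, while the paper's route is shorter and avoids the complex phase entirely; the two integrands are related precisely by the reflection formula you planned to use.
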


\begin{proof} The Mellin-Barnes transform is defined through the identity
which is given by
\begin{eqnarray}\label{asser1}
\frac{1}{(1+x)^{\alpha}} = \frac{1}{2\pi \mathsf{i}}\frac{1}{\Gamma(\alpha)}
\oint_{\Lambda} ~x^u\Gamma(-u)\Gamma(\alpha+u) \text{d}u,
\end{eqnarray}
here $\Gamma(\cdot)$ denotes the Gamma function. This representation is valid for any $x \in \R$. The contour $\Lambda$ contains the vertical line which passes between left and right poles in the complex plane $u$
from negative to positive imaginary infinity, and should be closed to the left in case $x>1$, and to the right complex infinity if $0<x<1.$ \\
We now proceed to compute $\widehat{{\cal C}}_{d,\beta}$ as follows:
\begin{equation*}
\begin{aligned}
\widehat{{\cal C}}_{d,\beta}(\norm{\boldsymbol{z}};\delta,\lambda) =&
\mathcal{F}_d[\mathcal{C}_{d,\beta}(t;\delta,\lambda)](\norm{\boldsymbol{z}})\\
=&\beta^d \mathcal{F}_d[\mathcal{C}_{d}(t;\delta,\lambda)](\beta \norm{\boldsymbol{z}}), \qquad \boldsymbol{z} \in \R^d.  \\
\end{aligned}
\end{equation*}
Applying Equation (\ref{asser1}), we obtain
\begin{equation*}
\begin{aligned}
\widehat{{\cal C}}_{d,\beta}(\norm{\boldsymbol{z}};\delta,\lambda)=&\frac{\beta^d}{(2\pi)^d}\frac{1}{\Gamma(\frac{\lambda}{\delta})}\int_{\R^d} {\rm e}^{ \mathsf{i} \beta \langle\boldsymbol{z},\boldsymbol{x}\rangle}\oint_{\Lambda} \Gamma(-u) \Gamma(u+\frac{\lambda}{\delta})\norm{\boldsymbol{x}}^{u\delta} \text{d}u~ \text{d} \boldsymbol{x} \\
=&\frac{\beta^d}{(2\pi)^d}\frac{1}{\Gamma(\frac{\lambda}{\delta})}\oint_{\Lambda} \Gamma(-u) \Gamma(u+\frac{\lambda}{\delta})\int_{\R^d} {\rm  e}^{\mathsf{i} \beta \langle\boldsymbol{z}, \boldsymbol{x}\rangle}\norm{\boldsymbol{x}}^{u\delta}\text{d} \boldsymbol{x} \text{d}u~. \\
\end{aligned}
\end{equation*}
We now invoke the well known relationship \citep{allendes2013solution},
\begin{eqnarray*} \int_{\R^d}
{\rm e}^{\mathsf{i} \langle\boldsymbol{z},\boldsymbol{x}\rangle}\norm{\boldsymbol{x}}^{u\delta}\text{d} \boldsymbol{x}=\frac{2^{d+u\delta}\pi^{d/2}\Gamma(d/2+u\delta/2)}{\Gamma(-u\delta/2)\norm{\boldsymbol{z}}^{d
+ u\delta}},
\end{eqnarray*}
and, by abuse of notation, we now write $z:=\norm{\boldsymbol{z}}$. We have
\begin{eqnarray} \label{tonto-lava}
\widehat{\cal C}_{d,\beta}(z;\delta,\lambda)&=&\frac{\beta^d}{(2\pi)^d}\frac{1}{\Gamma(\frac{\lambda}{\delta})}\frac{1}{2\pi \mathsf{i} }\oint_{\Lambda} ~\Gamma(-u) \Gamma(u+\frac{\lambda}{\delta}) \frac{2^{d+u\delta}\pi^{d/2} \Gamma(d/2+u\delta/2)}{\Gamma(-u\delta/2)}\frac{1}{{(\beta z)}^{d + u\delta}} \text{d}u \nonumber \\
&=&\frac{z^{-d}}{\pi^{d/2}}\frac{1}{\Gamma(\frac{\lambda}{\delta})}\frac{1}{2\pi \mathsf{i} } \oint_{\Lambda}  \frac{\Gamma(-u) \Gamma(u +\frac{\lambda}{\delta})\Gamma(d/2+u\delta/2)}{\Gamma(-u\delta/2)}\left(\frac{2}{z\beta}\right)^{u\delta}  \text{d}u.
\end{eqnarray}
For any given value of $|2/z\beta|,$ it does not matter whether it is smaller or greater than $1$. In fact,  we may close the contour to the left complex infinity.
This is a convergent series for any values of the variable $z$ because we have a situation when denominator supresses numerator in the coefficient in front of powers of $z.$
We now observe that the functions $u \mapsto \Gamma(\frac{\lambda}{\delta} + u)$ and $u \mapsto \Gamma\left(d/2+\frac{u\delta}{2}\right)$ contain poles in the complex plane, respectively when $\frac{\lambda}{\delta} + u = -n$, and when $d/2+\frac{u\delta}{2} = -n,$ $ n \in \mathbb{N}$. Using this fact and through direct inspection we obtain that  the right hand side in (\ref{tonto-lava}) matches with (\ref{spectrGW_2}). The proof is completed.
\end{proof}

\section{Main Results}
We start by providing a solution to Problem \ref{PP} when $\phi(\cdot; \btheta)= {\cal M}(\cdot; \nu)$, so that $\boldsymbol{\theta} \equiv \nu$ and $\Theta=(0,\infty)$.
\begin{thm}\label{theo11}
Let   ${\cal M}(\cdot;\nu)$ be the Mat\'ern function as defined in  Equation (\ref{matern}). Let $K_{\varepsilon;\nu;\beta_2,\beta_1}[{\cal M}]$ with  $0<\beta_1<\beta_2$, be the  Zastavnyi operator (\ref{zastavnyi1}) related to the function ${\cal M}(\cdot; \nu)$.
\begin{enumerate}
\item  Let $\varepsilon>0$. Then, $K_{\varepsilon;\nu;\beta_2,\beta_1}[{\cal M}] \in \Phi_{\infty}$
if and only if $\varepsilon\geq 2\nu >0$;
\item For a given $d\in\mathbb{N}$, let $\varepsilon<0$. Then, $K_{\varepsilon;\nu;\beta_2,\beta_1}[{\cal M}] \in\Phi_{d} $ 
if and only if $\varepsilon\leq -d<0$.
\end{enumerate}
\end{thm}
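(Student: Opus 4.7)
The plan is to translate positive definiteness into nonnegativity of the radial Fourier transform and then into a sharp two-variable inequality that can be analysed by inspection. By linearity of the radial Fourier transform together with the closed-form expression (\ref{stein10}), the spectral density of the operator $K_{\varepsilon;\nu;\beta_2,\beta_1}[{\cal M}]$ in $\R^d$ equals
\begin{equation*}
\frac{\Gamma(\nu+d/2)}{\pi^{d/2}\Gamma(\nu)\,(\beta_2^{\varepsilon}-\beta_1^{\varepsilon})}\left[\frac{\beta_2^{\varepsilon+d}}{(1+\beta_2^2 z^2)^{\nu+d/2}}-\frac{\beta_1^{\varepsilon+d}}{(1+\beta_1^2 z^2)^{\nu+d/2}}\right],\qquad z\ge 0.
\end{equation*}
Hence $K_{\varepsilon;\nu;\beta_2,\beta_1}[{\cal M}]\in\Phi_d$ is equivalent to the bracketed difference having the same sign as $\beta_2^{\varepsilon}-\beta_1^{\varepsilon}$ for every $z\ge 0$; membership in $\Phi_{\infty}$ is obtained by requiring this for every $d\in\mathbb{N}$.

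Next, dividing by the positive quantity $\beta_1^{\varepsilon+d}/(1+\beta_1^2 z^2)^{\nu+d/2}$ and introducing $r:=\beta_2/\beta_1>1$ together with $s:=\beta_1 z\ge 0$, the condition becomes
\begin{equation*}
r^{\varepsilon+d}\ \ge\ \left(\frac{1+r^2 s^2}{1+s^2}\right)^{\nu+d/2},\qquad s\ge 0,
\end{equation*}
when $\varepsilon>0$, and the reverse inequality when $\varepsilon<0$. A quick differentiation shows that $g(s):=(1+r^2 s^2)/(1+s^2)$ is strictly increasing on $[0,\infty)$ with $g(0)=1$ and $\lim_{s\to\infty}g(s)=r^2$.

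With this in hand, necessity and sufficiency fall out by testing at the two extremes of $s$. For $\varepsilon>0$, the limit $s\to\infty$ forces $r^{\varepsilon+d}\ge r^{2\nu+d}$, i.e.\ $\varepsilon\ge 2\nu$; conversely, if $\varepsilon\ge 2\nu$ then $r^{\varepsilon+d}\ge r^{2\nu+d}\ge g(s)^{\nu+d/2}$ for every $s$. Since this condition is $d$-free it characterises $\Phi_{\infty}$. For $\varepsilon<0$, evaluating at $s=0$ forces $r^{\varepsilon+d}\le 1$, i.e.\ $\varepsilon\le -d$; conversely, $\varepsilon\le -d$ yields $r^{\varepsilon+d}\le 1\le g(s)^{\nu+d/2}$ for every $s$. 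No step presents a genuine obstacle; the only delicate point is the sign bookkeeping in the first paragraph---namely, that the denominator $\beta_2^{\varepsilon}-\beta_1^{\varepsilon}$ flips sign with $\varepsilon$---and this is precisely what splits the analysis into the two regimes stated in the theorem.
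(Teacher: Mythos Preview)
Your argument is correct and is precisely the spectral approach the paper sets up in Section~2 via equations (\ref{FT}) and (\ref{stein10}); the paper does not include a written proof of this theorem in the body, but your Fourier-transform reduction and the monotonicity analysis of $g(s)=(1+r^2s^2)/(1+s^2)$ is the natural execution of that framework and matches the style of the proof given for Theorem~\ref{theo33}. The only minor remark is that you should mention that the integrability condition $\int_0^\infty \widehat{\phi}_d(z)z^{d-1}\,{\rm d}z<\infty$ required alongside nonnegativity of the spectral density is automatic here, since the spectral density of $K_{\varepsilon;\nu;\beta_2,\beta_1}[{\cal M}]$ is a linear combination of two Mat\'ern spectral densities, each of which is integrable.
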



The following result gives some conditions for  the solution of  Problem \ref{PP} when  $\phi(\cdot; \btheta)= {\cal GW}(\cdot;\mu,\kappa)$, so that $\btheta=(\kappa,\mu)^{\top}$, $\Theta=[0,\infty) \times (0,\infty)$.

\begin{thm}\label{theo111}
Let $d$ be a positive integer. Let  ${\cal GW}(\cdot;\mu,\kappa)$ be the function defined through  Equations (\ref{eq:wendland}) and  (\ref{eq:wendland1}), for $\kappa>0$ and $\kappa=0$ respectively. Let $K_{\varepsilon; \mu,\kappa,\beta_2,\beta_1}[{\cal GW}]$ with $0<\beta_1<\beta_2$, be the  Zastavnyi operator (\ref{zastavnyi1})
related to the function ${\cal GW}(\cdot; \mu,\kappa)$. Then:
\begin{enumerate}
\item   If  $\varepsilon\geq 2\kappa+1>0$ and $\mu\geq (d+7)/2 + \kappa$,  then  $K_{\varepsilon; \mu,\kappa,\beta_2,\beta_1}[{\cal GW}] \in \Phi_{d}$.
\item     $K_{\varepsilon; \mu,\kappa,\beta_2,\beta_1}[{\cal GW}] \in \Phi_{d}$ if and only if $\varepsilon= 2\kappa+1$ and $\mu\geq (d+7)/2 + \kappa$.
\item   If  $\varepsilon\leq -d<0$ and $\mu\geq (d+7)/2 + \kappa$,  then  $K_{\varepsilon; \mu,\kappa,\beta_2,\beta_1}[{\cal GW}] \in \Phi_{d} $.
\item   $K_{-d; \mu,\kappa,\beta_2,\beta_1}[{\cal GW}] \in \Phi_{d}$ if and only if    $\mu\geq (d+1)/2 + \kappa$.

\end{enumerate}
\end{thm}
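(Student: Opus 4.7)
The plan is to attack all four parts through the isotropic Fourier transform. Taking $\mathcal{F}_{d}$ of (\ref{zastavnyi1}) with $\phi={\cal GW}(\cdot;\mu,\kappa)$ and using the scaling identity $\widehat{{\cal GW}}_{d,\beta}(z;\mu,\kappa)=\beta^{d}\widehat{{\cal GW}}_{d}(\beta z;\mu,\kappa)$, one gets
\begin{equation*}
\widehat{K}_{\varepsilon}(z)\;=\;\frac{\beta_{2}^{\varepsilon+d}\,\widehat{{\cal GW}}_{d}(\beta_{2}z;\mu,\kappa)-\beta_{1}^{\varepsilon+d}\,\widehat{{\cal GW}}_{d}(\beta_{1}z;\mu,\kappa)}{\beta_{2}^{\varepsilon}-\beta_{1}^{\varepsilon}}.
\end{equation*}
Requiring $\widehat{K}_{\varepsilon}\ge 0$ for every $0<\beta_{1}<\beta_{2}$ is then equivalent to asking that $H_{z}(\beta):=\beta^{\varepsilon+d}\widehat{{\cal GW}}_{d}(\beta z;\mu,\kappa)$ be non-decreasing in $\beta$ when $\varepsilon>0$ and non-increasing when $\varepsilon<0$, for every fixed $z>0$. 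Via the substitution $u=\beta z$ this reduces to the univariate monotonicity of $u\mapsto u^{\varepsilon+d}\widehat{{\cal GW}}_{d}(u;\mu,\kappa)$ on $(0,\infty)$.

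For part (1) I would invoke \cite{Porcu:Zastavnyi:Xesbaiat} directly; its Buhmann-class argument already covers $\varepsilon\ge 2\kappa+1$ with $\mu\ge(d+7)/2+\kappa$. The necessity direction of (2) I would handle through the asymptotics of $\widehat{{\cal GW}}_{d}(u;\mu,\kappa)$ as $u\to\infty$: under $\mu\ge(d+1)/2+\kappa$ the spectral density decays like a positive constant times $u^{-(d+2\kappa+1)}$, so that $u^{\varepsilon+d}\widehat{{\cal GW}}_{d}(u)\sim u^{\varepsilon-2\kappa-1}$. The required monotonicity of the leading term forces $\varepsilon\ge 2\kappa+1$; the extra smoothness encoded in $\mu\ge(d+7)/2+\kappa$ is then used to dominate the next-order oscillatory correction, guaranteeing monotonicity not just asymptotically but on all of $(0,\infty)$.

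For parts (3) and (4) I would exploit the observation that when $\varepsilon\le -d$, the prefactor $u^{\varepsilon+d}$ is already non-increasing; combined with monotonicity of $u\mapsto\widehat{{\cal GW}}_{d}(u;\mu,\kappa)$ itself, the product is non-increasing and positive definiteness follows. Hence the heart of (3), as well as the ``if'' direction of (4), is to show that $\widehat{{\cal GW}}_{d}(\cdot;\mu,\kappa)$ is a non-increasing function on $(0,\infty)$ whenever $\mu\ge(d+1)/2+\kappa$ (equivalently, whenever ${\cal GW}\in\Phi_{d}$). I would prove this by integrating the defining Hankel integral (\ref{FT}) by parts, using the vanishing of ${\cal GW}$ together with its first $\lceil\kappa\rceil$ derivatives at the endpoint $t=1$ of the support to annihilate the boundary contributions, and recognising the remainder as a non-negative kernel acting on $(1-t)^{\mu-1}$. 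For the converse of (4), if $\mu<(d+1)/2+\kappa$ then by \cite{Zastavnyi:2002} ${\cal GW}\notin\Phi_{d}$, so $\widehat{{\cal GW}}_{d}$ must take negative values and cannot be non-increasing towards $0$ at infinity; equivalently, one exhibits a $z$ at which $\widehat{K}_{-d}(z)<0$.

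The main obstacle is the monotonicity analysis of $\widehat{{\cal GW}}_{d}$ in (3)-(4). Because ${\cal GW}$ is compactly supported, its Hankel transform is a real-analytic oscillatory function, and signing its derivative requires careful bookkeeping of the boundary terms generated at $t=1$ together with a Schl\"afli-type manipulation of the Bessel kernel. The distinction between the smoothness thresholds $\mu\ge(d+7)/2+\kappa$ needed in (1)-(3) and $\mu\ge(d+1)/2+\kappa$ sufficient for (4) arises precisely from the number of boundary derivatives that vanish in the integration-by-parts step, and getting this accounting right is where the real work of the proof lies.
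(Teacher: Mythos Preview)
The paper, as provided, does not contain a proof of Theorem~\ref{theo111}: it is stated in Section~3 alongside Theorems~\ref{theo11} and~\ref{theo22} but no argument is given, so there is nothing in the text against which to benchmark your proposal. Part~(1) is, as you say, simply the result of \cite{Porcu:Zastavnyi:Xesbaiat}, and the paper's introduction confirms this.

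On the substance of your plan for (3)--(4) there is a genuine gap. Your Fourier reduction is correct, and for $\varepsilon=-d$ it does collapse to the assertion that $u\mapsto\widehat{{\cal GW}}_{d}(u;\mu,\kappa)$ is non-increasing on $(0,\infty)$. But ${\cal GW}$ is compactly supported, so $\widehat{{\cal GW}}_{d}$ is entire and governed at large $u$ by oscillatory Bessel asymptotics; global monotonicity is a strong claim, and your integration-by-parts sketch (``recognising the remainder as a non-negative kernel acting on $(1-t)^{\mu-1}$'') gives no mechanism for killing those oscillations. More seriously, your argument is internally inconsistent: if $\widehat{{\cal GW}}_{d}$ were non-increasing whenever $\mu\ge(d+1)/2+\kappa$, as you assert for~(4), then your product argument (non-increasing prefactor times non-increasing spectral density) would prove~(3) under that \emph{same} weaker hypothesis, making the stated condition $\mu\ge(d+7)/2+\kappa$ in~(3) superfluous. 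Your closing paragraph attributes the discrepancy to boundary-derivative counting, but that does not dissolve the logical tension: either the spectral density is monotone under the weak hypothesis (and then~(3) is not sharp as stated), or it is not (and then your route to~(4) collapses).

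A smaller point on~(2): your asymptotic $u^{\varepsilon+d}\widehat{{\cal GW}}_{d}(u)\sim c\,u^{\varepsilon-2\kappa-1}$ yields at best the necessary inequality $\varepsilon\ge 2\kappa+1$, not the equality $\varepsilon=2\kappa+1$ printed in the theorem. Since part~(1) already asserts sufficiency for all $\varepsilon\ge 2\kappa+1$, the equality in~(2) cannot be a necessary condition as written; the statement itself appears defective, and no proof strategy will recover it in that form.
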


We now assume that for a fixed $d$, $\lambda>d$. This condition is necessary to ensure integrability of Generalized Cauchy covariance functions, and hence the related spectral density to be bounded. The following result provides a solution to Problem \ref{PP} when  $\phi(\cdot; \btheta)= {\cal C}(\cdot; \delta,\lambda)$ for $0<\delta<2$, so that $\Theta=(0,2) \times (d,\infty)$.

\begin{thm}\label{theo22}
 Let ${\cal C}(\cdot;\delta,\lambda)$ for $0<\delta<2$  be the Generalized Cauchy  function as defined in  Equation (\ref{cauchy}). Let $K_{\varepsilon;\delta,\lambda;\beta_2,\beta_1}[{\cal C}]$ with  $0<\beta_1<\beta_2$, be the  Zastavnyi operator (\ref{zastavnyi1}) related to the function ${\cal C}(\cdot; \delta,\lambda)$.
\begin{enumerate}
\item Let $\varepsilon>0$. If $\varepsilon\geq\delta >0$ and  $\delta<1$, then $K_{\varepsilon;\delta,\lambda;\beta_2,\beta_1}[{\cal C}] \in \Phi_{\infty}$;
\item Let $\varepsilon<0$.  $K_{\varepsilon;\delta,\lambda;\beta_2,\beta_1}[{\cal C}] \in \Phi_{\infty}$ if and only if $\varepsilon\leq -\lambda <0$.
\end{enumerate}
\end{thm}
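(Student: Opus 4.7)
The proof proceeds from a single mean--value identity for the Zastavnyi operator applied to the Cauchy family. Differentiating $\beta \mapsto \beta^{\varepsilon}{\cal C}(t/\beta;\delta,\lambda)$ and using the telescoping identity $x(1+x)^{-\lambda/\delta-1}=(1+x)^{-\lambda/\delta}-(1+x)^{-\lambda/\delta-1}$ with $x=(t/\beta)^{\delta}$, one finds
\begin{equation*}
\frac{\partial}{\partial\beta}\!\left[\beta^{\varepsilon}{\cal C}(t/\beta;\delta,\lambda)\right]=\beta^{\varepsilon-1}\Psi_{\beta}(t),\qquad \Psi_{\beta}(t):=(\varepsilon+\lambda){\cal C}(t/\beta;\delta,\lambda)-\lambda{\cal C}(t/\beta;\delta,\lambda+\delta).
\end{equation*}
Combining with $\beta_2^{\varepsilon}-\beta_1^{\varepsilon}=\varepsilon\int_{\beta_1}^{\beta_2}\beta^{\varepsilon-1}\,d\beta$ and the fundamental theorem of calculus gives $K_{\varepsilon;\delta,\lambda;\beta_2,\beta_1}[{\cal C}](t)=\varepsilon^{-1}\,\mathbb{E}_{\beta}[\Psi_{\beta}(t)]$, where the expectation is taken with respect to the positive probability measure $\beta^{\varepsilon-1}\,d\beta/\int_{\beta_1}^{\beta_2}\beta^{\varepsilon-1}\,d\beta$ on $[\beta_1,\beta_2]$. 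This representation powers both halves of the theorem.

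For Part 2 sufficiency, suppose $\varepsilon\leq-\lambda<0$. Then $-(\varepsilon+\lambda)\geq 0$ and $\lambda>0$, so
\begin{equation*}
-\Psi_{\beta}(t)=(-\varepsilon-\lambda){\cal C}(t/\beta;\delta,\lambda)+\lambda{\cal C}(t/\beta;\delta,\lambda+\delta)
\end{equation*}
is a non--negative linear combination of two members of $\Phi_{\infty}$ (both Cauchy functions belong to $\Phi_\infty$ since $\delta\in(0,2)$), with value at the origin equal to $|\varepsilon|$. Hence $K(t)=|\varepsilon|^{-1}\mathbb{E}_{\beta}[-\Psi_{\beta}(t)]$ is a positive--weight mixture of elements of $\Phi_\infty$ and lies in $\Phi_\infty$. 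For the converse, suppose $-\lambda<\varepsilon<0$. The tail behaviour ${\cal C}(t/\beta;\delta,\lambda)\sim\beta^{\lambda}t^{-\lambda}$ as $t\to\infty$ yields
\begin{equation*}
K(t)\sim\frac{\beta_2^{\varepsilon+\lambda}-\beta_1^{\varepsilon+\lambda}}{\beta_2^{\varepsilon}-\beta_1^{\varepsilon}}\,t^{-\lambda},
\end{equation*}
where the numerator is positive (since $\varepsilon+\lambda>0$ and $\beta_2>\beta_1$) and the denominator is negative (since $\varepsilon<0$), so $K(t)<0$ for all sufficiently large $t$. Every element of $\Phi_\infty$ is a non--negative Gaussian scale mixture by Schoenberg's theorem, which contradicts $K\in\Phi_\infty$.

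For Part 1, with $\varepsilon>0$ the function $\Psi_{\beta}$ is in general not in $\Phi_{\infty}$ (a Taylor expansion in $t$ at the origin reveals that it can exceed its value at $0$), so the mean--value decomposition above does not directly reduce the problem to a positive mixture of PD functions. Instead, I would exploit the hypothesis $\delta<1$: under it, $r\mapsto{\cal C}(r;\delta,\lambda)$ is itself completely monotonic, so admits ${\cal C}(r;\delta,\lambda)=\int_0^\infty e^{-r\xi}\mu(\xi)\,d\xi$ with $\mu\geq0$ given by subordinating a $\mathrm{Gamma}(\lambda/\delta,1)$ weight through the one--sided $\delta$--stable density. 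Substituting and rescaling give $K(t)=\int_0^\infty e^{-t\eta}h(\eta)\,d\eta$ with
\begin{equation*}
h(\eta)=\frac{\beta_2^{\varepsilon+1}\mu(\beta_2\eta)-\beta_1^{\varepsilon+1}\mu(\beta_1\eta)}{\beta_2^{\varepsilon}-\beta_1^{\varepsilon}},
\end{equation*}
so complete monotonicity of $K$ in $t$ (hence $K\in\Phi_\infty$ by Schoenberg) reduces to the pointwise non--decrease of $\xi\mapsto\xi^{\varepsilon+1}\mu(\xi)$ on $(0,\infty)$. The tail asymptotic $\mu(\xi)\sim\lambda\,\Gamma(1-\delta)^{-1}\xi^{-\delta-1}$ pins the sharp exponent $\varepsilon=\delta$ in the hypothesis $\varepsilon\geq\delta$. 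The main obstacle I expect is establishing this monotonicity globally on $(0,\infty)$; I would do so either by differentiating the subordinated representation and reapplying the telescoping identity used to obtain $\Psi_{\beta}$, or equivalently on the Fourier side by showing that $y\mapsto y^{\varepsilon+d}\widehat{\cal C}_{d}(y;\delta,\lambda)$ is non--decreasing for every $d$, using the Mellin--Barnes representation from Theorem \ref{the3} in which the factor $(\varepsilon-u\delta)$ produced by the $\beta$--derivative is positive at each relevant pole $u=-\lambda/\delta-n$ and $u=-d/\delta-2n/\delta$ precisely when $\varepsilon\geq\delta$.
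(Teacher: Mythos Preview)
The paper itself does not include a proof of this theorem, so there is nothing to compare against on the paper's side; I can only assess your argument on its own.

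\medskip
\textbf{Part 2.} Your treatment is correct and complete. The mean--value identity
\[
\frac{\partial}{\partial\beta}\bigl[\beta^{\varepsilon}{\cal C}(t/\beta;\delta,\lambda)\bigr]=\beta^{\varepsilon-1}\bigl[(\varepsilon+\lambda){\cal C}(t/\beta;\delta,\lambda)-\lambda\,{\cal C}(t/\beta;\delta,\lambda+\delta)\bigr]
\]
is correctly derived, and for $\varepsilon\le-\lambda$ it expresses $K$ as a probability mixture over $\beta\in[\beta_1,\beta_2]$ of convex combinations of two elements of $\Phi_\infty$, hence $K\in\Phi_\infty$. The necessity argument via the tail asymptotic $K(t)\sim\frac{\beta_2^{\varepsilon+\lambda}-\beta_1^{\varepsilon+\lambda}}{\beta_2^{\varepsilon}-\beta_1^{\varepsilon}}\,t^{-\lambda}$ is also sound: for $-\lambda<\varepsilon<0$ this limit is strictly negative, whereas every $\phi\in\Phi_\infty$ satisfies $\phi\ge 0$ by the Schoenberg representation (\ref{Boch}).

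\medskip
\textbf{Part 1.} Here there is a genuine gap. Your reduction is valid: for $\delta<1$ the function $r\mapsto{\cal C}(r;\delta,\lambda)$ is completely monotonic (as the composition of the completely monotonic $(1+x)^{-\lambda/\delta}$ with the Bernstein function $r^{\delta}$), so it has a Bernstein density $\mu\ge 0$, and $K\in\Phi_\infty$ would follow from the pointwise nonnegativity of
\[
h(\eta)=\frac{\beta_2^{\varepsilon+1}\mu(\beta_2\eta)-\beta_1^{\varepsilon+1}\mu(\beta_1\eta)}{\beta_2^{\varepsilon}-\beta_1^{\varepsilon}},
\]
equivalently from the monotone nondecrease of $\xi\mapsto\xi^{\varepsilon+1}\mu(\xi)$ on $(0,\infty)$. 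The Tauberian asymptotics you cite do identify $\varepsilon=\delta$ as the critical exponent at $\xi\to\infty$, but that only controls the behaviour at the endpoints; the \emph{global} monotonicity on $(0,\infty)$ is precisely the content that remains unproved, and you acknowledge this.

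Neither of the two routes you sketch closes the gap as written. ``Reapplying the telescoping identity'' to the subordinated representation does not obviously produce a sign, since $\mu$ is a nontrivial Gamma--stable convolution rather than a single Cauchy kernel. The Fourier--side route via Theorem~\ref{the3} is also not conclusive: differentiating $\beta\mapsto\beta^{\varepsilon}\widehat{\cal C}_{d,\beta}(z;\delta,\lambda)$ in the Mellin--Barnes integral indeed produces a factor $(\varepsilon-u\delta)$ which is positive at every left pole, but the resulting residue series still carries the alternating signs $(-1)^n/n!$ from (\ref{spectrGW_2}), so positivity of the individual multiplicative factors does not yield positivity of the sum. To finish Part~1 you would need an honest argument that $\xi^{\varepsilon+1}\mu(\xi)$ is nondecreasing for all $\xi>0$ when $\varepsilon\ge\delta$ (for instance, by exploiting an explicit integral representation of $\mu$ and showing $(\varepsilon+1)\mu(\xi)+\xi\mu'(\xi)\ge 0$ directly), or else a different mechanism altogether.
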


Two technical lemmas are needed prove our   last result.
\begin{lemma}\label{lem10}
Let $\mathcal{K}_{\nu}:[0,\infty)\to \R$ be the  function defined through (\ref{matern}). Let $\nu>0$. Then, for all $z>0$,
\begin{enumerate}
                    \item $\underset{z\to +\infty}{\lim}z\frac{\mathcal{K}_{\nu}^{\prime}(z)}{\mathcal{K}_{\nu}(z)}=-\infty$, for all $\nu\in(-\infty,+\infty)$;
                    \item $\underset{z\to +0}{\lim}z\frac{\mathcal{K}_{\nu}^{\prime}(z)}{\mathcal{K}_{\nu}(z)}=-\nu$,  \qquad for $\nu>1$.
                  \end{enumerate}
\end{lemma}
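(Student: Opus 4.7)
The plan is to reduce both limits to the logarithmic derivative of $\mathcal{K}_\nu$ by means of the standard three-term recurrence for modified Bessel functions of the second kind. Combining the textbook identities $\mathcal{K}_\nu'(z) = -\frac{1}{2}\bigl(\mathcal{K}_{\nu-1}(z) + \mathcal{K}_{\nu+1}(z)\bigr)$ and $\mathcal{K}_{\nu+1}(z) - \mathcal{K}_{\nu-1}(z) = (2\nu/z)\mathcal{K}_\nu(z)$ produces the identity
\begin{equation*}
 z\frac{\mathcal{K}_\nu'(z)}{\mathcal{K}_\nu(z)} \;=\; -\,z\,\frac{\mathcal{K}_{\nu-1}(z)}{\mathcal{K}_\nu(z)} \;-\; \nu, \qquad z>0,
\end{equation*}
which is valid for every real order $\nu$ once one invokes the symmetry $\mathcal{K}_{-\mu} = \mathcal{K}_\mu$ to make sense of $\mathcal{K}_{\nu-1}$ when $\nu-1\le 0$. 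Both claims then reduce to the behaviour of the ratio $\mathcal{K}_{\nu-1}/\mathcal{K}_\nu$ at the two endpoints.

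For part~1 I would invoke the classical large-argument expansion $\mathcal{K}_\mu(z) = \sqrt{\pi/(2z)}\,e^{-z}\bigl(1 + O(1/z)\bigr)$ as $z\to +\infty$, whose leading factor is independent of $\mu$. The leading terms therefore cancel in the quotient, giving $\mathcal{K}_{\nu-1}(z)/\mathcal{K}_\nu(z) \to 1$; the first summand on the right of the displayed identity then behaves like $-z$ and drives the whole expression to $-\infty$, uniformly in the sign and size of $\nu$.

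For part~2 I would use the small-argument asymptotic $\mathcal{K}_\mu(z) \sim 2^{\mu-1}\Gamma(\mu)\,z^{-\mu}$ as $z\to 0^+$, which is valid for every $\mu>0$. The hypothesis $\nu > 1$ is precisely what allows this expansion to be applied simultaneously to $\mathcal{K}_\nu$ and to $\mathcal{K}_{\nu-1}$, yielding
\begin{equation*}
 \frac{\mathcal{K}_{\nu-1}(z)}{\mathcal{K}_\nu(z)} \;\sim\; \frac{z}{2(\nu-1)} \qquad \text{as } z\to 0^+.
\end{equation*}
Multiplying by $z$ sends this term to $0$, and the basic identity produces the limit $-\nu$.

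The only real obstacle is the small-$z$ regime for orders $\mu\le 0$: when $\nu\in(0,1]$ the leading behaviour of $\mathcal{K}_{\nu-1}$ is governed by a different term of the ascending series (with a logarithmic singularity at $\nu=1$), and additional case distinctions would be unavoidable. The restriction $\nu>1$ in part~2 is precisely what sidesteps this issue, and the proof collapses to the short computation above.
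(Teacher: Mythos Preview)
Your argument is correct. The recurrence identity $z\,\mathcal{K}_\nu'(z)/\mathcal{K}_\nu(z) = -z\,\mathcal{K}_{\nu-1}(z)/\mathcal{K}_\nu(z) - \nu$ is standard, and the asymptotics you invoke at both endpoints are the textbook ones; in particular, the computation $\mathcal{K}_{\nu-1}(z)/\mathcal{K}_\nu(z)\sim z/(2(\nu-1))$ as $z\to 0^+$ is exactly right when $\nu>1$, and your remark that this is precisely where the restriction enters is on point.

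The paper proceeds differently: it quotes a two-sided Tur\'an-type bound of Baricz,
\[
-\sqrt{\tfrac{\nu}{\nu-1}z^2+\nu^2}\;<\;\frac{z\,\mathcal{K}_\nu'(z)}{\mathcal{K}_\nu(z)}\;<\;-\sqrt{z^2+\nu^2},
\]
with the right inequality valid for all real $\nu$ and the left for $\nu>1$, and reads both limits off by squeezing. That route is shorter on the page but leans on a comparatively recent reference. Your approach is more self-contained, relying only on classical recurrences and the first terms of the large- and small-argument expansions of $\mathcal{K}_\mu$; it also makes transparent \emph{why} $\nu>1$ is the natural threshold (so that $\mathcal{K}_{\nu-1}$ still has the power-law behaviour $\sim 2^{\nu-2}\Gamma(\nu-1)z^{-(\nu-1)}$ near the origin). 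Either argument is perfectly adequate here.
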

\begin{proof}
To prove the two assertions, it is enough to use the following result \citep{Baricz11},
\begin{equation}\label{lem100}
-\sqrt{\frac{\nu}{\nu-1}z^2+\nu^2}<\frac{z\mathcal{K}_{\nu}^{\prime}(z)}{\mathcal{K}_{\nu}(z)}<-\sqrt{z^2+\nu^2},
\end{equation}
where the left hand side of (\ref{lem100}) is true for all $\nu>1$, and the right hand side holds for all $\nu\in \R$.
\end{proof}

\begin{lemma}\label{lem11}
  Let ${\cal C}(\cdot;2,\lambda)$ be the Cauchy correlation function as defined at (\ref{cauchy}). Let $\beta>0$. Then, for $d>\lambda/2+2$ and $2\epsilon<-\lambda$ the following assertions are equivalent:
  \begin{enumerate}
    \item $\beta^{\epsilon}\widehat{{\cal C}}_{d,\beta}(z;2,\lambda)$  is decreasing with respect to $\beta$ on $[0,+\infty)$, for every $z,\lambda$;
    \item $\beta^{\varepsilon+\frac{2d+\lambda}{4}}\mathcal{K}_{\frac{2d-\lambda}{4}}(\beta)$ is decreasing with respect to $\beta$ on $[0,+\infty)$, for every $z,\lambda$;
    \item $ (\varepsilon+\frac{2d+\lambda}{4})+\beta\frac{\mathcal{K}_{\frac{2d-\lambda}{4}}^{\prime}(\beta)}{\mathcal{K}_{\frac{2d-\lambda}{4}}(\beta)}<0$, $\beta\in [0,\infty)$.
  \end{enumerate}
\end{lemma}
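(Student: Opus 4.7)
The plan is to exploit the fact that when $\delta=2$, the Cauchy function $\mathcal{C}(t;2,\lambda)=(1+t^2)^{-\lambda/2}$ has a radial Fourier transform in $\mathbb{R}^d$ available in closed form as a constant times $(\beta z)^{\alpha}\mathcal{K}_{\nu_0}(\beta z)$ for explicit exponents $\alpha,\nu_0$: this is a Mat\'ern-type Fourier pair, readable directly from (\ref{stein10}) under the identification $\lambda/2\leftrightarrow\nu+d/2$, and equivalently recoverable by closing the Mellin--Barnes contour appearing in the proof of Theorem \ref{the3} at the appropriate poles. The hypothesis $d>\lambda/2+2$ guarantees that the index of the Bessel function and the exponents are in the range where the spectral density is well defined and positive, while $2\varepsilon<-\lambda$ places us in the regime where the monotonicity question is nontrivial. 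Once this explicit form is in hand, the three equivalences reduce to a scaling change of variable followed by elementary calculus.

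For (1)$\Leftrightarrow$(2), one writes $\beta^{\varepsilon}\widehat{\mathcal{C}}_{d,\beta}(z;2,\lambda)$ using the closed form together with the scaling identity $\widehat{\phi(\cdot/\beta)}_d(z)=\beta^d\widehat{\phi}_d(\beta z)$ invoked earlier in the paper. The joint dependence on $\beta$ and $z$ then enters only through $\beta^{\varepsilon+(2d+\lambda)/4}\mathcal{K}_{(2d-\lambda)/4}(\beta z)$ times a positive factor depending on $z$ alone. Substituting $u=\beta z$ with $z>0$ fixed, monotonicity in $\beta$ becomes monotonicity in $u$ of the scalar function $u\mapsto u^{\varepsilon+(2d+\lambda)/4}\mathcal{K}_{(2d-\lambda)/4}(u)$, and requiring this for every $z>0$ is precisely the one-variable statement (2).

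For (2)$\Leftrightarrow$(3), since $\mathcal{K}_{\nu}(\beta)>0$ for all $\beta>0$, the function $g(\beta):=\beta^{\varepsilon+(2d+\lambda)/4}\mathcal{K}_{(2d-\lambda)/4}(\beta)$ is strictly positive on $(0,\infty)$, so $g$ is decreasing iff $(\log g)'(\beta)\le 0$. Differentiating and multiplying through by $\beta>0$ yields exactly the expression in (3). The main obstacle is the factorization step underlying (1)$\Leftrightarrow$(2): one has to carefully untangle the two distinct roles played by $\beta$ in $\beta^{\varepsilon}\widehat{\mathcal{C}}_{d,\beta}(z;2,\lambda)$ (as the multiplicative weight of the operator, and as the scaling inside the Bessel argument inherited from the Fourier-transform scaling law), and then verify that the uniform-in-$z$ condition collapses to a scalar condition via the homogeneity of $\mathcal{K}_\nu$. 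This collapse relies crucially on the availability of the Mat\'ern-type closed form, which is precisely what makes the boundary case $\delta=2$ tractable and is the nontrivial input of the lemma.
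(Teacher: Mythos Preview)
Your approach coincides with the paper's: both reduce (1)$\Leftrightarrow$(2) by invoking the Mat\'ern-type closed form for $\widehat{\mathcal{C}}_{d,\beta}(z;2,\lambda)$ and the scaling law, and both obtain (2)$\Leftrightarrow$(3) by differentiating (you via the logarithmic derivative, the paper by factoring out the positive power $\beta^{\varepsilon+(2d+\lambda)/4-1}$ from $g'(\beta)$). The one substantive difference is that the paper's proof does not stop at the equivalence: it then invokes point~2 of Lemma~\ref{lem10} (with $\nu=(2d-\lambda)/4>1$, guaranteed by $d>\lambda/2+2$) together with the monotonicity of $\beta\,\mathcal{K}_\nu'(\beta)/\mathcal{K}_\nu(\beta)$ to conclude that assertion (3), and hence all three, are actually \emph{true} under the hypotheses $d>\lambda/2+2$, $2\varepsilon<-\lambda$. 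Your write-up establishes only the equivalence (which, as you implicitly note, does not itself require those numerical hypotheses), whereas the truth of (1) is what Theorem~\ref{theo33} consumes; you may wish to add that final step.
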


\begin{proof}
Showing that $\beta^{\varepsilon}\widehat{{\cal C}}_{d,\beta}(z;2,\lambda)$ is decreasing with respect to $\beta$ is the same as showing that
$\beta^{\varepsilon+\lambda/4+d/2}\mathcal{K}_{d/2-\lambda/4}( \beta)$ is decreasing. Point {\it 2} of Lemma~\ref{lem11} holds if and only if
$$ \beta^{\varepsilon+\frac{2d+\lambda}{4}-1}\left(\left (\varepsilon+\frac{2d+\lambda}{4} \right )+\beta\frac{\mathcal{K}_{\frac{2d-\lambda}{4}}^{\prime}(\beta)}{\mathcal{K}_{\frac{2d-\lambda}{4}}(\beta)}\right)<0.$$ Applying  point {\it 2} of Lemma~\ref{lem10} and the fact that $\beta\mathcal{K}_{\frac{2d-\lambda}{4}}^{\prime}(\beta)/\mathcal{K}_{\frac{2d-\lambda}{4}}(\beta)$ is decreasing with respect to $\beta$, the three assertions of Lemma~\ref{lem11} are true if $2d>\lambda+4$ and $2\varepsilon<-\lambda$. The proof is completed.
\end{proof}
We are now able to fix a solution to Problem \ref{PP} when $\phi(\cdot; \boldsymbol{\theta})= {\cal C}(\cdot; 2,\lambda)$, so that $\boldsymbol{\theta}=\nu$ and $\Theta=(0,\infty)$.
\begin{thm}\label{theo33}
Let ${\cal C}(\cdot;2,\lambda)$  be the Cauchy  function as defined in  Equation (\ref{cauchy})
 and let $K_{\varepsilon;2,\lambda;\beta_2,\beta_1}[{\cal C}]$ with  $0<\beta_1<\beta_2$ be the Zastavnyi operator (\ref{zastavnyi1}) related to the function ${\cal C}(\cdot; 2,\lambda)$.
Then, for $\lambda<2d-4$, $K_{\varepsilon;2,\lambda;\beta_2,\beta_1}[{\cal C}] \in \Phi_{d} $ provided  $2\varepsilon < -\lambda$.
\end{thm}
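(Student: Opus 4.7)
The plan is to turn the positive definiteness of $K_{\varepsilon;2,\lambda;\beta_2,\beta_1}[{\cal C}]$ in $\R^d$ into a monotonicity property of its radial Fourier transform in the scale parameter, and then invoke Lemma~\ref{lem11}. First I would compute the $d$-dimensional Fourier transform of $K_{\varepsilon;2,\lambda;\beta_2,\beta_1}[{\cal C}]$. By linearity of $\mathcal{F}_d$ together with the scaling identity ${\cal F}_d[\phi(\cdot/\beta)](z)=\beta^d\widehat{\phi}_d(\beta z)=\widehat{\phi}_{d,\beta}(z)$ stressed right after (\ref{FT}),
\[
\mathcal{F}_d\!\left[K_{\varepsilon;2,\lambda;\beta_2,\beta_1}[{\cal C}]\right](z)
=\frac{\beta_2^{\varepsilon}\,\widehat{{\cal C}}_{d,\beta_2}(z;2,\lambda)-\beta_1^{\varepsilon}\,\widehat{{\cal C}}_{d,\beta_1}(z;2,\lambda)}{\beta_2^{\varepsilon}-\beta_1^{\varepsilon}},\qquad z\ge 0.
\]
By the Bochner/Schoenberg characterization of $\Phi_d$ recalled in Section~2, to conclude that $K_{\varepsilon;2,\lambda;\beta_2,\beta_1}[{\cal C}]\in\Phi_d$ it suffices to show that this ratio is nonnegative for every $z\ge 0$, the integrability against $z^{d-1}$ being inherited from that of the individual transforms $\widehat{{\cal C}}_{d,\beta_i}$.

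The hypothesis $2\varepsilon<-\lambda$ in particular forces $\varepsilon<0$; since $\beta_2>\beta_1>0$, this yields $\beta_2^{\varepsilon}-\beta_1^{\varepsilon}<0$, so nonnegativity of the ratio above is equivalent to nonpositivity of its numerator, i.e.
\[
\beta_2^{\varepsilon}\,\widehat{{\cal C}}_{d,\beta_2}(z;2,\lambda)\;\le\;\beta_1^{\varepsilon}\,\widehat{{\cal C}}_{d,\beta_1}(z;2,\lambda),\qquad z\ge 0.
\]
Because $\beta_1<\beta_2$ are otherwise arbitrary, this is precisely the statement that the map $\beta\mapsto\beta^{\varepsilon}\widehat{{\cal C}}_{d,\beta}(z;2,\lambda)$ is nonincreasing on $(0,\infty)$ for each fixed $z\ge 0$, which is exactly assertion~1 of Lemma~\ref{lem11}.

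To finish, I would apply Lemma~\ref{lem11}: under the combined hypotheses $\lambda<2d-4$ (equivalently $d>\lambda/2+2$) and $2\varepsilon<-\lambda$, the three assertions of that lemma are equivalent and all three hold, so the required monotonicity in $\beta$ is secured and $K_{\varepsilon;2,\lambda;\beta_2,\beta_1}[{\cal C}]\in\Phi_d$ follows. The analytical core has been absorbed into Lemma~\ref{lem11}, which reduces the monotonicity to controlling the sign of $(\varepsilon+(2d+\lambda)/4)+\beta\,\mathcal{K}'_{(2d-\lambda)/4}(\beta)/\mathcal{K}_{(2d-\lambda)/4}(\beta)$; the main obstacle, had one not already dispensed with it through Lemmas~\ref{lem10} and~\ref{lem11}, would be showing this quantity remains negative for every $\beta>0$, a task that combines the two-sided Baricz-type bounds for the logarithmic derivative of $\mathcal{K}_\nu$ with its monotonicity in $\beta$. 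The role of the hypothesis $\lambda<2d-4$ is precisely to force the order $(2d-\lambda)/4$ of the Macdonald function to exceed $1$, which is the range required to apply part~2 of Lemma~\ref{lem10}.
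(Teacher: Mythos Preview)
Your proposal is correct and follows exactly the same route as the paper's own proof: reduce membership in $\Phi_d$ to nonnegativity of the radial Fourier transform, observe that with $\varepsilon<0$ this amounts to $\beta\mapsto\beta^{\varepsilon}\widehat{{\cal C}}_{d,\beta}(z;2,\lambda)$ being decreasing, and then invoke Lemma~\ref{lem11}. If anything, your write-up is more explicit than the paper's in tracking the sign of the denominator $\beta_2^{\varepsilon}-\beta_1^{\varepsilon}$ and in explaining why the hypothesis $\lambda<2d-4$ is needed.
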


\begin{proof}
We need to find conditions  such that  $K_{\varepsilon;2,\lambda;\beta_2,\beta_1}[{\cal C}] \in  \Phi_{d}$.
This is equivalent to the following condition:
$$\beta_1^{\varepsilon}\widehat{{\cal C}}_{d,\beta_1}(z;2,\lambda)- \beta_2^{\varepsilon}\widehat{{\cal C}}_{d,\beta_2}(z;2,\lambda)\geq 0.$$
Thus, we need to prove that the function $\beta^{\varepsilon}\widehat{{\cal C}}_{d,\beta}(z;2,\lambda)$ is decreasing with respect to $\beta$. Using Lemma \ref{lem11}, we have that $K_{\varepsilon;2,\lambda;\beta_2,\beta_1}[{\cal C}] \in  \Phi_{d}.$
\end{proof}


\section*{Acknowledgements}
The authors dedicate this work to Viktor Zastavnyi for his sixtieth birthday. \\
Partial support was provided   by Millennium
Science Initiative of the Ministry
of Economy, Development, and
Tourism, grant "Millenium
Nucleus Center for the
Discovery of Structures in
Complex Data"
for Moreno Bevilacqua and Emilio Porcu,
by FONDECYT grant 1160280, Chile for Moreno Bevilacqua and
by FONDECYT grant 1130647 , Chile for Emilio Porcu
and by grant
Diubb 170308 3/I from the university of Bio Bio for Tarik Faouzi. Tarik Faouzi and Igor Kondrashuk thank the support of project DIUBB 172409 GI/C at University of B{\'\i}o-B{\'\i}o. The work of I.K. was supported
in part by Fondecyt (Chile) Grants No. $1121030$ and by DIUBB (Chile) Grant No. $181409 3/R$.



\bibliographystyle{apalike}

\bibliography{mybib12m}







\end{document}